\newtheorem{theorem}{Theorem}[section]
\newtheorem*{theorem*}{Theorem}
\newtheorem{lemma}{Lemma}[section]
\newtheorem{corollary}{Corollary}[section]
\newtheorem{proposition}{Proposition}[section]
\theoremstyle{definition}
\newtheorem{definition}{\bf Definition}[section]
\newtheorem*{definition*}{\bf Definition}
\newtheorem{example}{\bf Example}[section]
\newtheorem{remark}{Remark}
\newtheorem*{remark*}{Remark}
\newtheorem*{example*}{\bf Example}
\newcommand{\spec}{{\rm spec}}
\title[An infinite dimensional saddle point theorem and application]{An infinite dimensional saddle point theorem and application}
\begin{document}

\author{Fabrice Colin$^a$}

\address{$^a$Laurentian University, School of Engineering and Computer Science, Sudbury, Ontario, Canada}

\email{fcolin@laurentian.ca}

\author{Ablanvi Songo$^{1,b}$}

\address{$^b$Universit\'{e} de Sherbrooke, D\'{e}partement de math\'{e}matiques, Sherbrooke, Qu\'{e}bec, Canada}

\email{ablanvi.songo@usherbrooke.ca}
\footnote{Corresponding author}

\keywords{Generalized Saddle Point Theorem, Schrödinger equations, Indefinite functionals, $\tau-$topology}

\subjclass[2020]{35J10; 35J61; 58E05}

    \begin{abstract}
	By using the $\tau$-topology of Kryszewski and Szulkin, we establish a natural new version of the Saddle Point Theorem for strongly indefinite functionals. The abstract result will be applied to study the existence of solutions to a strongly indefinite semilinear Schrödinger equation, where the associated functional is indefinite, that is, the functional is of the form $J(u) = \dfrac{1}{2} \langle Lu, u \rangle - \Psi(u)$ defined on a Hilbert
 space $X$, where $L : X \to X$ is a self-adjoint operator whose negative and positive eigenspaces are both infinite-dimensional.
\end{abstract}

\maketitle

\section{Introduction}
	In 1978, Rabinowitz \cite[Theorem 4.6]{Ra} established the following result. Let $X=Y\oplus Z$, where $X$ is a real Banach space and $Y\ne \{0\}$ and is finite dimensional. Let $I \in \mathcal{C}^1(X, \mathbb{R})$ be such that 
	\begin{enumerate}
		\item[$(I_1)$] there is a constant $\alpha$ and a bounded neighborhood $D$ of $0$ in $Y$ such that $\underset{u\in \partial D}{\max}\, I(u)\le \alpha$,
		\item[$(I_2)$] there is a constant $\beta> \alpha$ such that $\underset{u\in Z}{\inf}\, I(u)\ge \beta$.
	\end{enumerate}
	If $I$ satisfies the Palais-Smale condition $(PS)_c$ with 
	\begin{equation*}
		c:= \underset{\gamma\in \Gamma}{\inf} \underset{u\in \overline{D}}{\max}\,I(\gamma(u)),
	\end{equation*}
	where 
	\begin{equation*}
		\Gamma:= \{\gamma\in \mathcal{C}(\overline{D}, X)\;|\; \gamma|_{\partial D} =\operatorname{id}\},
	\end{equation*}
	then $c$ is a critical value of $I$.\\
    
	As an application, the above theorem was used in \cite{Ra} to find a weak solution of the following problem 
	\begin{equation*}
		\begin{cases}
			-\Delta u= \lambda f(x)u +p(x,u),\quad x\in \Omega\\
			u =0, \quad x\in \partial \Omega, 
		\end{cases}
		\end{equation*}
	where $\Omega \subset \mathbb{R}^N$ is a bounded domain whose boundary is a smooth manifold, $\lambda$ is an eigenvalue of the Sturm-Liouville eigenvalue problem
	
		\begin{equation*}
		\begin{cases}
			-\Delta u= \mu f(x)v,\quad x\in \Omega\\
			v =0, \quad x\in \partial \Omega, 
		\end{cases}
	\end{equation*}
	$f$ is a positive function, $p(x,u)\in \mathcal{C}(\overline{\Omega}\times\mathbb{R}, \mathbb{R})$ and satisfies some growth conditions and where the associated functional is strongly defined and satisfies the saddle point geometry. 
	
	It is crucial that $\dim Y < \infty$ before applying the above theorem.

Since the space $Y$ is finite-dimensional, the classical Saddle Point Theorem mentioned above cannot be used when one investigates, for example, the existence of a solution to the following semilinear Schrödinger equation 
\begin{equation*}
	(P)\qquad
	\begin{cases}
		-\Delta u + V(x) u = \lambda u + g_0 (x,u), \quad  x\in \mathbb{R}^N, \\
		u \in H^1(\mathbb{R}^N),
	\end{cases} \\
\end{equation*}
where $ V \in \mathcal{C}(\mathbb{R}^N, \mathbb{R})$ is $\mathbb{Z}^N-$periodic, $\lambda$ is a parameter and the non-linearity $g_0 \in \mathcal{C}(\mathbb{R}^N \times \mathbb{R}, \mathbb{R})$ satisfies some growth conditions.

Indeed, it is well known that $(P)$ has a variational structure, and its weak solutions are critical points of the functional $J$ defined on the Hilbert space $X:= H^1(\mathbb{R}^N) $ by
\begin{equation}
	\label{eq 1}
	J(u) = \dfrac{1}{2} \int_{\mathbb{R}^N} \Big(|\nabla u|^2 + V(x) u^2 -\lambda u^2\Big)dx - \int_{\mathbb{R}^N} G_0(x,u)dx
\end{equation}
with 
\begin{equation*}
	G_0(x,u) := \int_{0}^{u}g_0(x,t)dt.
\end{equation*}

We recall that $u$ is a weak solution of $(P)$ if $u \in X$, and satisfies for any $ \varphi \in X$ : 
\begin{equation*}
	\int_{\mathbb{R}^N} \Big(\nabla u \nabla \varphi + V(x) u \varphi -\lambda u \varphi \Big)dx - \int_{\mathbb{R}^N} g_0(x,u)\varphi dx =0.
\end{equation*}

If the negative space $Y$ of the quadratic part in $J$ is finite dimensional, then we have the orthogonal decomposition $X= Y \oplus Z$ with $Z$ the positive space of the quadratic part in $J$ and we can use the above classical Saddle Point Theorem of Rabinowitz to obtain the critical points of $J$, see \cite{Te,CoTe}. However, for some important class of functions $V$ and if $\lambda$ does not belong to the spectrum of $-\Delta + V$, it is well known that $Y$ is infinite dimensional and the problem $(P)$ is strongly indefinite; see \cite{LiSh}. \\

Our aim in this present paper is to establish a natural generalization of the classical Saddle Point Theorem \cite[Theorem 4.6]{Ra} (mentioned above) which may deal with more general strongly indefinite functionals. As an application, we show that problem $(P)$ admits at least one solution.

We would like to point out that there are various infinite-dimensional generalizations of the Saddle point theorem in the literature. Among others, we can mention \cite{BeRa, Ho, Silva1, Silva2, LiSh, BaCo}. Here, our approach is different. We use the $\tau-$topology introduced by Kryszweski and Szulkin to formulate a general minimax result (see Theorem 2.1). Combining the latter result with another one (see Proposition 3.1) we give a straightforward proof of our abstract result. \\

The remainder of the paper is organized as follows. In the next section, we recall some classical results and present some other preliminary results that will be used later. In Section 3, we state and prove our main result, and in Section 4, we apply this result to obtain at least one solution to problem $(P)$.

\section{A general minimax principle}
Before we present the main result of this section, which is a general minimax principle, we recall some preliminary results.
\subsection{Kryszewski-Szulkin degree theory}
In this subsection, we are following the presentation of the degree theory of Kryszewski and Szulkin given in \cite{Wi} by Michel Willem.

Let $Y$ be a real separable Hilbert space endowed with inner product $( \cdot, \cdot )$ and the associated norm $\|\cdot\|$.  

On $Y $ we consider the $\sigma-$topology introduced by Kryszewski and Szulkin \cite{KS}; that is the topology generated by the norm 
\begin{equation}
	|||u||| := \sum_{k =0}^{\infty} \frac{1}{2^ {k+1}}|(u, e_k)|, \quad u \in Y,
\end{equation}
where $(e_k)_{k\ge 0}$ is a total orthonormal sequence in $Y$.\\

\begin{remark}	
	\label{remark 1}
	By the Cauchy-Schwarz inequality, one can show that $|||u||| \le \|u\|$ for every $u \in Y$. Moreover, if $(u_n)$ is a bounded sequence in $Y$ then 
	\begin{equation*}
		u_n \rightharpoonup u \Longleftrightarrow u_n \overset{\sigma}{\rightarrow} u,
	\end{equation*}
	where $\rightharpoonup$ denotes the weak convergence and $ \overset{\sigma}{\rightarrow}$ denotes the convergence in the $\sigma-$topology.
\end{remark}
Let $U$ be an open bounded subset of $Y$ such that its closure $\overline{U}$ is $\sigma-$closed.
\begin{definition}[\cite{Wi}]
	\label{definition 2.1}
	A map $f : \overline{U} \rightarrow Y$ is $\sigma-$admissible (admissible for short) if\\
	(1) $f$ is $\sigma-$continuous, \\
	(2) each point $u \in U$ has a $\sigma-$neighborhood $N_u$ in $Y$ such that $(\operatorname{id}-f)(N_u\cap U)$ is contained in a finite-dimensional subspace of $Y$.
\end{definition}
\begin{definition}[\cite{Wi}]
	A map $h :[0,1]\times \overline{U}\rightarrow Y$ is an admissible homotopy if \\
	(1)  $0 \notin h([0,1] \times \partial U)$,\\
	(2) $h$ is $\sigma-$continuous, that is $t_n \rightarrow t$ and $u_n \overset{\sigma}{\rightarrow} u$ implies $h(t_n,u_n) \overset{\sigma}{\rightarrow} h(t,u)$,\\
	(3) $h$ is $\sigma-$locally finite-dimensional. That is, for any $(t,u) \in[0,1]\times U$ there is a neighborhood $N_{(t,u)}$ in the product topology of $[0,1]$ and $(X, \sigma)$ such that $\{ v-h(s,v) \; | \; (s,v) \in N_{(t,u)} \cap ([0,1]\times U)\}$ is contained in a finite-dimensional subspace of $Y$.
\end{definition} Then, for an admissible map $f$ such that $0 \notin f(\partial U)$   we have (see \cite[ Theorem 6.6]{Wi})
\begin{equation*}
	deg (h(0,.), U) = deg(h(1,.),U),
\end{equation*}
where $deg$ is the topological degree of $f$ (about 0). Such a degree (called Kryszewski and Szulkin degree, see \cite[Definition 6.3]{Wi}) possesses the usual properties; in particular, if $f : \overline{U} \rightarrow Y$ is an admissible map with $0 \notin f(\partial U)$ and $deg(f, U)\ne 0$, then there exists $u \in U$ such that $f(u)= 0$.\\

Now, let $X = Y\oplus Z$, where $Y$ is closed and $Z=Y^\perp$, be a real separable Hilbert space endowed with inner product $(\cdot, \cdot)$ and the associated norm $\|\cdot\|$. Let $(e_k)_{k\ge 0}$ be an orthogonal basis of $Y$. On $X$, we define a new norm by setting
\begin{equation*}
	|u|_\tau := \max \Bigg(\sum_{k =0}^{\infty} \frac{1}{2^ {k+1}}|( Pu, e_k)|, \|Qu\| \Bigg), \quad u \in X,
\end{equation*}
where $P, Q$ are, respectively, orthogonal projections of $X$ into $Y$ and $Z$ and we denote by $\tau$  the topology generated by this norm. The topology $\tau$ was introduced by Kryszewski and Szulkin \cite{KS}.
\begin{remark}	
	\label{remark 2} For every $u\in X$, 
	we have $\|Qu\| \le |u|_\tau$ and $|||Pu||| \le |u|_\tau$. Moreover, if $(u_n)$ is a bounded sequence in $X$ then 
	\begin{equation*}
	u_n \overset{\tau}{\rightarrow} u \Longleftrightarrow	Pu_n \rightharpoonup Pu \quad\text{and}\quad Qu_n \to Qu,
	\end{equation*}
	where $\rightharpoonup$ denotes the weak convergence and $ \overset{\tau}{\rightarrow}$ denotes the convergence in the $\tau-$topology.
\end{remark}

\subsection{ A deformation lemma}
We recall some definitions and standard notation : 
Let  $S \subset X$ and $J : X \rightarrow \mathbb{R}$ a functional of class $\mathcal{C}^1$. We shall denote by $J'(u)$, the Fréchet derivative of $J$ at $u\in X$. 

For every $\alpha, \beta \in \mathbb{R}$ we denote by \;$ dist(u, S):= \|u-S\|$ the distance from the point $u$ to the set $S$ (in the topology induced by the norm $\|\cdot\|$),\;
$S_{\alpha} := \{u \in X \;|\; dist(u,S) \le \alpha\} $,\; $J_{\beta}:= \{u \in X \;|\; J(u)\ge \beta \}$, \; $J^{\alpha}:= \{u \in X \;|\; J(u)\le \alpha \}$ and $J_\beta^{\alpha}:= J_\beta \cap J^\alpha$.\\

Since $X$ is a Hilbert space, then the gradient of $J$ is a continuous vector field $\nabla J : X \rightarrow X$, where for every $u \in X$, $\nabla J (u)$ is characterized by the unique element in $X$ given by the Riesz representation theorem such that 
\begin{equation*}
	J'(u) v= (\nabla J(u), v), \; \text{for every}\; v \in X.
\end{equation*}

The functional $J$ is said to be $\tau-$upper semi-continuous if the set $J_{\beta}$ is $\tau-$closed and 
we say that $\nabla J$ is weakly sequentially continuous if the sequence $(\nabla J(u_n))$ converges weakly to $\nabla J(u)$ whenever $(u_n)$ converges weakly to $u$ in $X$. \\

We consider the class of $\mathcal{C}^1-$functionals $J : X \rightarrow \mathbb{R}$ such that 

\text{(A)}	$J$ is $\tau-$upper semi-continuous and $\nabla J$ is weakly sequentially continuous.\\

We recall the following lemma which will play a key role in the proof of the main result of this section; see Lemmas \cite[Lemma 6.8]{Wi} and \cite[Lemma 8]{Baco2}.
\begin{lemma}[Deformation Lemma]
	\label{lemma 2.1}
	Assume that $J$ satisfies $(A)$. Let $S \subset X$, $c \in \mathbb{R}, \; \epsilon, \; \delta >0$ be such that 
	\begin{equation}
		\label{equation 3}
		\text{for any}\quad u \in J^{-1}\big([c-2\epsilon, c+ 2 \epsilon]) \cap S_{2\delta}, \; \text{we have}\; \| J' (u) \| \ge \frac{8\epsilon}{\delta}.
	\end{equation}
	Then there exists  $\eta \in \mathcal{C}([0,1]\times J^{c +2\epsilon}, X)$ such that  :\\
	(i) $\eta(t,u) =u $ if $t=0$ or if $u \notin J^{-1}\big([c-2\epsilon, c+ 2 \epsilon]) \cap S_{2\delta} $, \\
	(ii) $\eta(1, J^{c + \epsilon} \cap S) \subset J^{c -\epsilon}$,\\
	(iii) $\| \eta(t,u)-u\| \le \frac{\delta}{2}$ for every $u \in J^{c+2\epsilon}$ and for every $t \in [0,1]$,\\
	(iv) $J(\eta(.,u))$ is non-increasing for every $ u \in J^{c+2\epsilon}$, \\
	(v) each point $(t,u) \in [0,1]\times J^{c +2 \epsilon}$ has a $\tau-$neighborhood $N_{(t,u)}$ such that  $\{ v-\eta(s,v)\; |\; (s,v) \in N_{(t,u)}\cap \big([0,1]\times J^{c + 2\epsilon}\big)\}$ is contained in a finite-dimensional subspace of $X$, \\
	vi) $\eta$ is $\tau-$continuous.
\end{lemma}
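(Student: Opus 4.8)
The plan is to run the classical deforming-flow argument; the one ingredient absent from the definite case is that the vector field generating $\eta$ must be constructed so as to be \emph{locally finite-dimensional in the $\tau$-topology}, which is precisely what forces $\eta$ to satisfy (v) and (vi). Put $\mu:=8\epsilon/\delta$, $B:=J^{-1}([c-2\epsilon,c+2\epsilon])\cap S_{2\delta}$, $A:=J^{-1}([c-\epsilon,c+\epsilon])\cap S_\delta$, and $B':=J^{-1}((c-2\epsilon,c+2\epsilon))\cap\{u:\dist(u,S)<2\delta\}$, so that $B'$ is norm-open with $A\subseteq B'\subseteq B$; by \eqref{equation 3}, $\|J'(u)\|\ge\mu$ on $B$.

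First I would construct the vector field. For $u_0\in B$ set $z_{u_0}:=\nabla J(u_0)/\|\nabla J(u_0)\|$, a \emph{fixed} unit vector with $J'(u_0)z_{u_0}=\|J'(u_0)\|\ge\mu$; since $J\in\mathcal C^1$ there is a neighbourhood $U_{u_0}\ni u_0$ on which $J'(u)z_{u_0}>\mu/2$. The crucial and technically most delicate point is that the cover $\{U_{u_0}:u_0\in B\}\cup\{X\setminus B'\}$ of $X$ can be refined to a $\tau$-locally finite open cover on whose elements these descent inequalities persist: here one uses that $(X,\tau)$ is metrizable by $|\cdot|_\tau$, hence paracompact, together with the weak sequential continuity of $\nabla J$ and Remark \ref{remark 2}, by which the scalar maps $u\mapsto J'(u)z_{u_0}$ are $\tau$-continuous on bounded sets, so that the sets realizing $J'(u)z_{u_0}>\mu/2$ may be taken $\tau$-open. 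Picking a norm-locally Lipschitz partition of unity $\{\lambda_i\}$ with $\tau$-locally finite supports subordinate to such a refinement and setting $V_0:=\sum_i\lambda_i z_{u_i}$, one gets a norm-locally Lipschitz field with $\|V_0\|\le1$ and $J'(u)V_0(u)>\mu/2$ for $u\in B'$, and — since only finitely many $\lambda_i$ are nonzero near any point of $(X,\tau)$ — such that every point of $X$ has a $\tau$-neighbourhood on which $V_0$ takes values in a finite-dimensional subspace. Finally choose a norm-locally Lipschitz $\chi:X\to[0,1]$ with $\chi\equiv1$ on $A$, $\chi\equiv0$ off $B'$ (for instance $\chi=\dist(\cdot,X\setminus B')/(\dist(\cdot,X\setminus B')+\dist(\cdot,A))$), and set $V:=\tfrac{\delta}{2}\,\chi V_0$; then $V$ is norm-locally Lipschitz, $\|V\|\le\delta/2$, $V\equiv0$ off $B'\subseteq B$, $J'(u)V(u)\ge0$ for all $u$, $J'(u)V(u)>2\epsilon$ on $A$, and $V$ remains $\tau$-locally finite-dimensional.

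Next I would integrate the field: for $u\in J^{c+2\epsilon}$ let $\eta(\cdot,u)$ solve $\partial_t\eta=-V(\eta)$, $\eta(0,u)=u$; this is globally defined on $[0,1]$ because $V$ is bounded and locally Lipschitz, and $\frac{d}{dt}J(\eta(t,u))=-J'(\eta)V(\eta)\le0$, so $J^{c+2\epsilon}$ is invariant. Properties (i), (iii), (iv) are then immediate: a point outside $B$ lies outside $B'$, where $V\equiv0$, hence is fixed by $\eta$; $\|\eta(t,u)-u\|\le\delta/2$ since $\|V\|\le\delta/2$; and $t\mapsto J(\eta(t,u))$ is non-increasing. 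For (ii), let $u\in J^{c+\epsilon}\cap S$: if $J(u)<c-\epsilon$, monotonicity gives $J(\eta(1,u))<c-\epsilon$; otherwise $J(u)\in[c-\epsilon,c+\epsilon]$ and $u\in S$, so $\dist(\eta(t,u),S)\le\|\eta(t,u)-u\|\le\delta/2<\delta$, whence as long as $J(\eta(t,u))\ge c-\epsilon$ one has $\eta(t,u)\in A$ and therefore $\frac{d}{dt}J(\eta(t,u))<-2\epsilon$; integrating over $[0,1]$ forces $J(\eta(1,u))\le c-\epsilon$.

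It remains to prove (v) and (vi), which — together with the construction of $V$ — is the part requiring real care. Fix $(t,u)\in[0,1]\times J^{c+2\epsilon}$. The flow line $s\mapsto\eta(s,u)$ has norm-compact, hence $\tau$-compact, image, so it is covered by finitely many of the $\tau$-neighbourhoods on which $V$ is finite-dimensional; their union is a $\tau$-open set $\mathcal O$ on which $V$ takes values in a fixed finite-dimensional subspace $F$. Using the displacement bound $\|\eta(s,v)-v\|\le\delta/2$ and the $\tau$-local finiteness built into $V$, one finds a $\tau$-neighbourhood $N_{(t,u)}$ of $(t,u)$ all of whose flow lines stay in $\mathcal O$; hence $v-\eta(s,v)=\int_0^s V(\eta(r,v))\,dr\in F$ for every $(s,v)\in N_{(t,u)}\cap([0,1]\times J^{c+2\epsilon})$, which is (v). Finally, on $N_{(t,u)}$ the flow runs inside finite-dimensional affine slices, where $\tau$ and the norm topology coincide, so the standard continuous dependence of ODE solutions on initial data gives the $\tau$-continuity of $\eta$, i.e.\ (vi). The two delicate steps are thus the $\tau$-locally-finite-dimensional construction of $V$ and this localization of the flow, both carried out as in \cite[Lemma 6.8]{Wi} and \cite[Lemma 8]{Baco2}.
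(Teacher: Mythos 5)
The paper itself does not prove this lemma—it is quoted from \cite[Lemma 6.8]{Wi} and \cite[Lemma 8]{Baco2}—so what matters is whether your reconstruction of that proof is sound, and it has a genuine gap exactly at the step you yourself single out as the delicate one: the claim that, because $\nabla J$ is weakly sequentially continuous and $u\mapsto J'(u)z_{u_0}$ is ``$\tau$-continuous on bounded sets,'' the descent sets $\{u: J'(u)z_{u_0}>\mu/2\}$ ``may be taken $\tau$-open.'' This does not follow. A $\tau$-ball is unbounded in the norm of $X$, so a sequence $v_n\overset{\tau}{\to}u_0$ need not be bounded, need not converge weakly to $u_0$, and weak sequential continuity of $\nabla J$ then gives no control on $J'(v_n)z_{u_0}$; sequential $\tau$-continuity on bounded sets simply does not make a superlevel set $\tau$-open (not even relatively $\tau$-open in $J^{c+2\epsilon}$). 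In Willem's proof this is precisely where the structural form of the functional is used: there $\varphi(u)=\tfrac12\big(\|Qu\|^2-\|Pu\|^2\big)-\psi(u)$ with $\psi$ bounded below, so that any $\tau$-convergent sequence lying in $\{\varphi\ge c-2\epsilon\}$ (which contains the support of the vector field) is automatically norm-bounded, and only then does weak sequential continuity yield the $\tau$-open neighbourhoods on which the descent inequality persists, relative to that superlevel set. Under the bare assumption (A) of the present paper you have no such boundedness mechanism, so as written your construction of a $\tau$-locally finite, $\tau$-locally finite-dimensional field $V$ with the descent property is not justified; you must either import this extra structure (as the application in Section 4 does, where $G_0\ge 0$), or restrict the descent neighbourhoods to $N_{u_0}\cap J_{c-2\epsilon}$ and supply the boundedness argument there.

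Two smaller points. First, your cover $\{U_{u_0}\}\cup\{X\setminus B'\}$ uses the non-open set $X\setminus B'$; for a partition of unity you should cover with $\tau$-open sets (e.g. the complement of a slightly smaller closed set), and the partition functions should be built from $\dist_\tau$ so that $V$ is Lipschitz for $|\cdot|_\tau$, not merely for the norm. Second, your derivation of (v)--(vi) is circular as sketched: to know that flow lines of points $\tau$-near $u$ stay in the $\tau$-open set $\mathcal{O}$ you already need $\tau$-continuous dependence of the flow, and ``finite-dimensional affine slices'' does not supply it, because the initial point $v$ varies in all of $X$. The standard repair (as in \cite[Lemma 6.8]{Wi}) is a Gronwall estimate in the $|\cdot|_\tau$-norm, using that $V$ is $\tau$-locally Lipschitz and $\tau$-locally finite-dimensional, which yields the localization and the $\tau$-continuity of $\eta$ simultaneously; with that, and with the main gap above closed, the rest of your argument (construction of $\chi$, the ODE, and properties (i)--(iv)) is correct and quantitatively consistent with the constants $8\epsilon/\delta$, $\delta/2$ and $2\epsilon$.
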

With the aid of the Lemma $\ref{lemma 2.1}$, we prove the following general minimax principle result.
\subsection{General minimax principle}
The following theorem is an infinite-dimensional generalization of \cite[ Theorem 2.8]{Wi}.
\begin{theorem}[General minimax principle]
	\label{theorem 2.1}
Assume that $J\in \mathcal{C}^1(X, \mathbb{R})$ satisfies $(A)$, that is, $J$ is $\tau-$upper semi-continuous and $\nabla J$ is weakly sequentially continuous. Let $M$ be a closed metric subset of $X$, and let $M_0$ be a closed subset of $M$. Let $\operatorname{int}(M)$ be the interior of $M$.
Let us define 
 \begin{center}
 	$\Lambda_0 : = \Big\{\gamma_0 : M_0 \rightarrow X \;\Big|\;\gamma_0 $ \quad \text{is \;$\tau-$continuous}\Big\},
 \end{center}
\begin{center}
		$\Gamma : = \Big\{\gamma : M \rightarrow X \;\Big|\;\gamma \quad \text{is}\quad \tau-$continuous, $\gamma \big|_{ M_0} \in \Lambda_0$ and\\ every  $u\in \operatorname{int}(M)$ has a $\tau-$neighborhood $N_u$ in $X$ such that $(\operatorname{id}-\gamma)(N_u\cap \operatorname{int}(M))$ \\ is contained in a finite-dimensional subspace of $X \Big \}$.
	\end{center}
	If $J$ satisfies
	\begin{equation}
		\label{eq 4}
		\infty > c := \underset{\gamma \in \Gamma}{\inf}\; \underset{u \in M}{\sup} \;J(\gamma(u))> a := \underset{\gamma_0 \in \Lambda_0}{\sup}\; \underset{u \in M_0}{\sup} \; J(\gamma_0(u)),
	\end{equation}
	Then,
	for every $\epsilon \in ] 0, \frac{c-a}{2}[$, $\delta >0$ and $\gamma \in \Gamma$ such that
	\begin{equation}
		\label{eq 5}
		\underset{u\in M}{\sup}\; J (\gamma(u)) \le c +\epsilon , 
	\end{equation}
	there exists $u \in X$ such that \\
	a) $c-2\epsilon \le J(u) \le c+2\epsilon$,\\
	b) $dist(u, \gamma(M)) \le 2 \delta$,\\ 
	c) $\|J' (u)\| < \frac{8\epsilon}{\delta}$.
\end{theorem}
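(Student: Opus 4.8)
The plan is to argue by contradiction, running the classical deformation argument behind minimax principles (compare \cite[Theorem 2.8]{Wi}) but with Lemma~\ref{lemma 2.1} in place of the ordinary deformation lemma and with the $\tau$-topology tracked throughout. Suppose the conclusion fails: there exist $\epsilon\in\,]0,\tfrac{c-a}{2}[$, $\delta>0$ and $\gamma\in\Gamma$ satisfying \eqref{eq 5} for which no $u\in X$ fulfils a), b) and c) at once. Put $S:=\gamma(M)$. The failure then reads exactly: every $u\in J^{-1}([c-2\epsilon,c+2\epsilon])\cap S_{2\delta}$ satisfies $\|J'(u)\|\ge\tfrac{8\epsilon}{\delta}$; that is, hypothesis \eqref{equation 3} of the Deformation Lemma holds for this $S$, $c$, $\epsilon$, $\delta$. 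No closedness or boundedness of $S$ is needed here, so Lemma~\ref{lemma 2.1} applies directly.

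Next I would invoke Lemma~\ref{lemma 2.1} to produce $\eta\in\mathcal C([0,1]\times J^{c+2\epsilon},X)$ with properties (i)--(vi), and define
\begin{equation*}
	\beta:=\eta(1,\gamma(\cdot)):M\longrightarrow X,
\end{equation*}
which is legitimate because \eqref{eq 5} gives $\gamma(M)\subset J^{c+\epsilon}\subset J^{c+2\epsilon}$. The heart of the proof is to check that $\beta\in\Gamma$. Its $\tau$-continuity follows by composing the $\tau$-continuous map $\gamma$ with $\eta(1,\cdot)$, which is $\tau$-continuous by (vi). On $M_0$: since $\gamma\in\Gamma$ we have $\gamma|_{M_0}\in\Lambda_0$, so $\sup_{u\in M_0}J(\gamma(u))\le a$; because $\epsilon<\tfrac{c-a}{2}$ forces $a<c-2\epsilon$, the set $\gamma(M_0)$ is disjoint from $J^{-1}([c-2\epsilon,c+2\epsilon])$, whence (i) gives $\eta(1,\gamma(u))=\gamma(u)$ for $u\in M_0$, and therefore $\beta|_{M_0}=\gamma|_{M_0}\in\Lambda_0$. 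For the $\tau$-locally-finite-dimensional clause, fix $u\in\operatorname{int}(M)$ and write, for $v$ near $u$,
\begin{equation*}
	v-\beta(v)=\bigl(v-\gamma(v)\bigr)+\bigl(\gamma(v)-\eta(1,\gamma(v))\bigr);
\end{equation*}
the first bracket stays in a finite-dimensional subspace on a $\tau$-neighbourhood of $u$ because $\gamma\in\Gamma$, while property (v) of $\eta$ at the point $(1,\gamma(u))\in[0,1]\times J^{c+2\epsilon}$ confines the second bracket to a finite-dimensional subspace whenever $\gamma(v)$ lies in a suitable $\tau$-neighbourhood of $\gamma(u)$, which pulls back, via the $\tau$-continuous $\gamma$, to a $\tau$-neighbourhood of $u$; intersecting the two neighbourhoods yields one on which $v-\beta(v)$ lies in the (finite-dimensional) sum of the two subspaces. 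Hence $\beta\in\Gamma$.

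The contradiction is then immediate. For every $u\in M$, $\gamma(u)\in\gamma(M)\cap J^{c+\epsilon}=S\cap J^{c+\epsilon}$, so property (ii) of $\eta$ gives $\beta(u)=\eta(1,\gamma(u))\in J^{c-\epsilon}$, i.e.\ $J(\beta(u))\le c-\epsilon$. Therefore $\sup_{u\in M}J(\beta(u))\le c-\epsilon<c$, contradicting $c=\inf_{\gamma'\in\Gamma}\sup_{u\in M}J(\gamma'(u))\le\sup_{u\in M}J(\beta(u))$. This contradiction establishes a), b) and c).

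I expect the only genuinely delicate point to be the verification that $\beta\in\Gamma$, and within it the $\tau$-locally-finite-dimensional clause, where one must combine the finite-dimensionality coming from $\gamma$ with that coming from (v) of the Deformation Lemma and use the $\tau$-continuity of $\gamma$ to transport the relevant neighbourhood from $(1,\gamma(u))$ back to $u$. Everything else is bookkeeping with the list (i)--(vi); the sole place the strict inequality $\epsilon<\tfrac{c-a}{2}$ intervenes is to push $\gamma(M_0)$ strictly below the active band $[c-2\epsilon,c+2\epsilon]$ of the deformation, which is what makes $\eta$ stationary on $\gamma(M_0)$ and hence keeps $\beta$ admissible on $M_0$.
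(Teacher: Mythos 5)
Your proposal is correct and follows essentially the same route as the paper: argue by contradiction, note that the negation of a)--c) is exactly hypothesis \eqref{equation 3} with $S=\gamma(M)$, apply Lemma~\ref{lemma 2.1}, set $\beta=\eta(1,\gamma(\cdot))$, verify $\beta\in\Gamma$ (identity on $M_0$ via $a<c-2\epsilon$ and property (i), $\tau$-continuity via (vi), the finite-dimensional clause via the same decomposition $v-\beta(v)=(v-\gamma(v))+(\gamma(v)-\eta(1,\gamma(v)))$ combined with (v)), and conclude $c\le\sup_M J(\beta)\le c-\epsilon$, a contradiction. Your phrasing of the contradiction hypothesis is in fact cleaner than the paper's, but the argument is the same.
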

The proof of Theorem $\ref{theorem 2.1}$ follows the lines of the proof of \cite[Theorem 2.8]{Wi}. For the reader’s convenience, we sketch it here.
\begin{proof}
	Suppose that there are $\epsilon \in ] 0, \frac{c-a}{2}[$, $\delta >0$ and $\gamma \in \Gamma$ verifying
	$\underset{u\in M}{\sup}\; J (\gamma(u)) \le c +\epsilon$ 
	such that for every $u \in X$, we have 
	$c-2\epsilon \le J(u) \le c+2\epsilon$,
	$dist(u, \gamma(M)) \le 2 \delta$
	and $\|J' (u)\| \ge \frac{8\epsilon}{\delta}$.\\
	Then, the Lemma $\ref{lemma 2.1}$ can be applied with $S := \gamma(M)$. Since $\epsilon \in ] 0, \frac{c-a}{2}[$, then
	\begin{equation}
		\label{eq 6}
		c -2\epsilon > a.
	\end{equation} 
	Define $\beta : M \rightarrow X, \; \; u\mapsto \eta (1, \gamma(u))$, where $\eta$ is given by Lemma $\ref{lemma 2.1}$. Then, $\beta \in \Gamma$:\\
	$(1)$ For every $u \in M_0$, we obtain, from $(\ref{eq 4})$ and $(\ref{eq 6})$, $J(\gamma_0(u))< c-2\epsilon$. So, $\gamma_0(u) \notin J^{-1}\big( [ c-2\epsilon, c+2\epsilon] \big)$. Thus 
	\begin{equation*}
		\beta(u)= \eta(1, \gamma_0(u)) = \gamma_0(u).
	\end{equation*}
	$(2)$ By definition, $\beta$ is $\tau-$continuous. \\
	$(3)$ Let $u \in \operatorname{int}(M)$. Since $\gamma \in \Gamma$, $u$ has a $\tau-$neighborhood $N_u$ in $X$ such that  $(\operatorname{id}-\gamma)(N_u \cap \operatorname{int}(M)) \subset F_1$, where $F_1$ is a finite-dimensional subspace of $X$. From $(v)$ of the Deformation Lemma (Lemma $\ref{lemma 2.1}$), the point $(1, \gamma(u))$ has a $\tau-$neighborhood $V_{(1, \gamma(u))} := V_1 \times V_{\gamma(u)}$ with $V_1$ a neighborhood of $\{1\}$ and $V_{\gamma(u)}$ a $\tau-$neighborhood of $\gamma(u)$ such that $\{ x- \eta(s,x) \; |\; (s,x) \in V_{(1, \gamma(u))} \cap ([0,1]\times J^{c+2\epsilon})\}$ is contained in a finite-dimensional subspace $F_2$ of $X$. Thus for every $ v \in N_u \cap \gamma^{-1}(V_{\gamma(u)})\cap \operatorname{int}(M)$, we have $(\operatorname{id}-\beta)(v) = (\operatorname{id}-\gamma)(v) + \gamma (v) - \eta(1, \gamma(v)) \in F_1 + F_2$ which is also finite-dimensional of $X$.\\
	It follows from $(1)$, $(2)$ and $(3)$ that $\beta \in \Gamma$. 
	
	Since for every $u\in M$, $\gamma (u) \in J^{c+\epsilon}$, that is, $\gamma(u)\in S\cap J^{c+\epsilon}$, then by $(ii)$ of Lemma $\ref{lemma 2.1}$, $\eta(1, \gamma(u))\in J^{c-\epsilon}$. This implies that
	\begin{equation*}
		\underset{u\in M}{\sup}\; J(\beta(u)) = \underset{u\in M}{\sup}\; J\Big(\eta(1, \gamma(u)))\Big) \le c-\epsilon.
	\end{equation*}
	From $(\ref{eq 4})$ again, we finally have
	\begin{equation*}
		c \le \underset{u\in M}{\sup}\; J(\beta(u)) \le c-\epsilon,
	\end{equation*}
	giving a contradiction.\\
\end{proof}

From the preceding general minimax principle, we can deduce, under the given conditions, the existence of a Palais-Smale sequence. Typically, the existence of a critical point is then derived whenever the functional $J \in \mathcal{C}^1(X, \mathbb{R})$ also satisfies the $(PS)_c$ condition. We recall that 
$J$ is said to satisfy the $(PS)_c$ condition (or the Palais-Smale condition at level $c$) if any sequence $(u_n)\subset X$ such that 
\begin{equation*}
	J(u_n) \rightarrow c \quad \textit{and} \quad J' (u_n) \rightarrow 0
\end{equation*}
has a convergent subsequence.

\section{Generalized Saddle point Theorem}
In this Section, we will prove the following generalized Saddle Point Theorem, which is an infinite dimensional generalization of the classical Saddle Point Theorem of P. H. Rabinowitz. 

For $\rho >0$ set 
\begin{equation}
\label{eq 7}
	M = \{ u \in Y \; |\; \|u\| \le \rho \},
\end{equation}
Then $M$ is a sub-manifold of $Y$ with boundary $\partial M$.\\

Let us define $\operatorname{int}(M):= \{ u \in Y \; |\; \|u\| < \rho \}$.

\begin{theorem}[Generalized Saddle Point Theorem]
	\label{theorem 3.1}
	Assume that $J\in \mathcal{C}^1(X, \mathbb{R})$ satisfies $(A)$, that is, $J$ is $\tau-$upper semi-continuous and $\nabla J$ is weakly sequentially continuous. Suppose that
	\begin{equation}
		\label{eq 8}
		b:= \underset{Z}{\inf} \; J > a := \underset{\partial M}{\sup}\; J . 
	\end{equation}
    
Let $c\in \mathbb{R}$ be characterized as
	\begin{center}
		$c:= \underset{\gamma \in \Gamma}{\inf} \; \underset{u\in M}{\sup}\; J(\gamma(u))$, \\ 
where	$\Gamma : = \Big\{\gamma : M \rightarrow X \;\Big|\;\gamma \quad \text{is}\quad \tau-$continuous, $\gamma \big|_{ \partial M} = \operatorname{id}$ and\\ every  $u\in \operatorname{int}(M)$ has a $\tau-$neighborhood $N_u$ in $X$ such that $(\operatorname{id}-\gamma)(N_u\cap \operatorname{int}(M))$ \\ is contained in a finite-dimensional subspace of $X \Big \}$.
\end{center}	

	Then, there exists $(u_n)\subset X$ such that 
\begin{equation*}
	J(u_n) \rightarrow c, \qquad J'(u_n) \rightarrow 0, \quad \text{as}\quad n\to \infty.
\end{equation*}
\end{theorem}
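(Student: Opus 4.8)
The plan is to derive Theorem 3.1 as a direct consequence of the General minimax principle (Theorem 2.1), by taking $M$ as in \eqref{eq 7}, $M_0 := \partial M$, and $\Lambda_0 := \{\operatorname{id}|_{\partial M}\}$ (a single admissible map), so that the class $\Gamma$ here coincides with the one in Theorem 2.1. The first task is to check the hypothesis \eqref{eq 4}, namely $\infty > c > a$. The inequality $c > a$ is the crux: since every $\gamma_0 \in \Lambda_0$ is just the identity on $\partial M \subset Y$, we have $a = \sup_{\partial M} J$, matching the quantity in \eqref{eq 8}; and I need to show $c \ge b = \inf_Z J$. This is the standard linking/intersection argument: for any $\gamma \in \Gamma$, the admissibility condition (every interior point has a $\tau$-neighbourhood on which $\operatorname{id}-\gamma$ lands in a finite-dimensional subspace) together with $\gamma|_{\partial M} = \operatorname{id}$ forces $\gamma(M) \cap Z \ne \emptyset$; then $\sup_{u \in M} J(\gamma(u)) \ge \inf_Z J = b > a$, so passing to the infimum over $\Gamma$ gives $c \ge b > a$. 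Finiteness $c < \infty$ follows by testing with $\gamma = \operatorname{id}|_M$: since $M \subset Y$ is bounded and $J$ is continuous (indeed $\mathcal{C}^1$), $\sup_{u \in M} J(u) < \infty$.

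Granting \eqref{eq 4}, the conclusion is immediate. Fix a sequence $\epsilon_n \downarrow 0$ with $\epsilon_n \in \,]0, \tfrac{c-a}{2}[$ and set $\delta_n := \sqrt{\epsilon_n} \to 0$ (any choice with $\epsilon_n/\delta_n \to 0$ works). By definition of the infimum $c$, choose $\gamma_n \in \Gamma$ with $\sup_{u \in M} J(\gamma_n(u)) \le c + \epsilon_n$, so \eqref{eq 5} holds. Theorem 2.1 then produces, for each $n$, a point $u_n \in X$ with $|J(u_n) - c| \le 2\epsilon_n$, $\operatorname{dist}(u_n, \gamma_n(M)) \le 2\delta_n$, and $\|J'(u_n)\| < 8\epsilon_n/\delta_n = 8\sqrt{\epsilon_n}$. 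Letting $n \to \infty$ gives $J(u_n) \to c$ and $J'(u_n) \to 0$, which is exactly the asserted Palais–Smale sequence at level $c$. (The distance bound b) is not needed for the statement, but it is automatically delivered.)

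The main obstacle is the intersection claim $\gamma(M) \cap Z \ne \emptyset$ for every $\gamma \in \Gamma$, which is where the Kryszewski–Szulkin degree enters. The argument runs as follows: let $P : X \to Y$ be the orthogonal projection; the map $f := P \circ \gamma : M \to Y$ has the property that $f|_{\partial M} = \operatorname{id}|_{\partial M}$ and, by the finite-dimensionality condition in $\Gamma$, each interior point of $M$ has a $\tau$-neighbourhood on which $\operatorname{id} - f = P(\operatorname{id} - \gamma)$ takes values in a finite-dimensional subspace of $Y$; hence $f$ is a $\sigma$-admissible map on $\overline{\operatorname{int}(M)} = M$ in the sense of Definition 2.1. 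Since $f$ agrees with the identity on $\partial M$ and $0 \in \operatorname{int}(M)$, a homotopy (e.g.\ the affine homotopy $h(t,u) = (1-t)f(u) + tu$, which one checks is an admissible homotopy using the finite-dimensionality of $\operatorname{id}-f$ and the fact that $h(t,\cdot)|_{\partial M} = \operatorname{id}$ so $0 \notin h([0,1]\times\partial M)$) yields $\deg(f, \operatorname{int}(M)) = \deg(\operatorname{id}, \operatorname{int}(M)) = 1 \ne 0$. By the solution property of the degree there is $u_* \in \operatorname{int}(M)$ with $f(u_*) = 0$, i.e.\ $P\gamma(u_*) = 0$, which means $\gamma(u_*) \in Y^\perp = Z$. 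This establishes $\gamma(M) \cap Z \ne \emptyset$ and closes the proof. The delicate points to verify carefully are that $M$ (a closed ball in $Y$) has $\sigma$-closed closure and that the homotopy $h$ genuinely satisfies the $\sigma$-local finite-dimensionality requirement — both follow from $Y$ being a separable Hilbert space and from the structure of $\Gamma$, but deserve an explicit line.
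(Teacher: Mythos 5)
Your proposal is correct and takes essentially the same route as the paper: the Palais--Smale sequence is extracted from Theorem \ref{theorem 2.1} (the paper does this through Corollary \ref{cor 3.1}), and the key inequality $c\ge b$ is obtained exactly as in the paper's proof by showing $P\gamma$ is $\sigma$-admissible, homotoping it affinely to the identity (the paper isolates this step as Proposition \ref{prop 3.1}), and using the existence property of the Kryszewski--Szulkin degree to produce $x\in\operatorname{int}(M)$ with $P\gamma(x)=0$, i.e.\ $\gamma(x)\in Z$. The only questionable aside is your justification of $c<\infty$ via continuity of $J$ on the bounded set $M$ --- continuity does not imply boundedness on bounded sets in an infinite-dimensional space --- but this is immaterial here, since the statement itself posits $c\in\mathbb{R}$ (and in the application $J$ is bounded above on $Y$).
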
 
\begin{remark}
	Since, in the finite-dimensional spaces, all norms are equivalent, in case $Y$ is finite-dimensional, the conclusion remains valid (this is the classical Saddle Point Theorem of Rabinowitz \cite{Ra}). Therefore, we will only prove the theorem if $Y$ is infinite dimensional.
\end{remark}

Prior to proving Theorem~\ref{theorem 3.1}, we state some results which are needed in the proof.\\

The following results are respectively the generalizations of \cite[ Theorem 2.9]{Wi} and \cite[Corollary 4.5]{Ra}.
\begin{corollary}
	\label{cor 3.1}
	Suppose that the assumptions of Theorem $\ref{theorem 2.1}$ are satisfied and suppose that $J$ satisfies $(\ref{eq 4})$. Then there exists a sequence $(u_n)\subset X$ satisfying 
	\begin{equation*}
		J(u_n) \rightarrow c, \qquad J'(u_n) \rightarrow 0.
	\end{equation*}
	
	In particular, if $J$ satisfies $(PS)_c$ condition, then $c$ is a critical value $J$.\\
\end{corollary}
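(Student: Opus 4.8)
The plan is to deduce the Palais--Smale sequence directly from Theorem~\ref{theorem 2.1} by applying it along a well-chosen sequence of parameters, and then to extract a critical point using the $(PS)_c$ condition.

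First I would fix, for each $n \ge 1$, the parameters $\epsilon_n := \min\!\big(\tfrac{1}{n^{2}}, \tfrac{c-a}{4}\big)$ and $\delta_n := \tfrac{1}{n}$. By $(\ref{eq 4})$ we have $c > a$, so $\epsilon_n \in \big(0, \tfrac{c-a}{2}\big)$ for every $n$, and $\epsilon_n \to 0$. Since $(\ref{eq 4})$ also guarantees $c < \infty$ and $c = \inf_{\gamma \in \Gamma}\sup_{u\in M} J(\gamma(u))$, for each $n$ there is $\gamma_n \in \Gamma$ with $\sup_{u\in M} J(\gamma_n(u)) \le c + \epsilon_n$; that is, $(\ref{eq 5})$ holds with $\epsilon = \epsilon_n$ and $\gamma = \gamma_n$.

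Next I would apply Theorem~\ref{theorem 2.1} with the data $\epsilon_n, \delta_n, \gamma_n$ to obtain $u_n \in X$ satisfying conclusions a) and c) of that theorem, namely $c - 2\epsilon_n \le J(u_n) \le c + 2\epsilon_n$ and $\|J'(u_n)\| < \tfrac{8\epsilon_n}{\delta_n} \le \tfrac{8}{n}$. Letting $n \to \infty$ then gives $J(u_n) \to c$ and $J'(u_n) \to 0$, which is the required Palais--Smale sequence at level $c$. (Conclusion b) on the distance to $\gamma_n(M)$ plays no role here.) For the ``in particular'' assertion, if $J$ satisfies $(PS)_c$ then $(u_n)$ has a subsequence $u_{n_k} \to u$ in $X$, and since $J \in \mathcal{C}^1(X,\mathbb{R})$ the continuity of $J$ and of $J'$ forces $J(u) = c$ and $J'(u) = 0$; thus $u$ is a critical point of $J$ with critical value $c$.

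I do not expect a genuine obstacle here: the substantive content has already been packaged into Theorem~\ref{theorem 2.1} (and the Deformation Lemma~\ref{lemma 2.1} behind it). The only points needing care are that $c$ be finite and strictly greater than $a$ --- both ensured by $(\ref{eq 4})$ --- so that admissible parameters $\epsilon_n$ exist and $\gamma_n$ can be chosen, and that $\delta_n$ be chosen so that simultaneously $\epsilon_n \to 0$ and $\epsilon_n/\delta_n \to 0$; the pairing $\epsilon_n = O(n^{-2})$, $\delta_n = n^{-1}$ achieves this, but any choice with $\delta_n \to 0$ more slowly than $\epsilon_n$ would do.
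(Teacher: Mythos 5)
Your argument is correct and is exactly the standard deduction the paper intends (it omits the proof, referring implicitly to the way Willem's Theorem 2.9 follows from Theorem 2.8): choose $\epsilon_n\to 0$ with $\epsilon_n<\tfrac{c-a}{2}$, pick near-optimal $\gamma_n$ from the definition of $c$, apply Theorem~\ref{theorem 2.1} with $\epsilon_n/\delta_n\to 0$, and conclude with continuity of $J$ and $J'$ under $(PS)_c$. Your parameter bookkeeping (including the bound $8\epsilon_n/\delta_n\le 8/n$ in both cases of the minimum) checks out, so there is nothing to add.
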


\begin{proposition}
	\label{prop 3.1}
	Let $U$ be an open bounded subset of $X$ containing $0$ such that its closure $\overline{U}$ is $\sigma-$closed and $0\notin \partial U$. If $\phi: \overline{U} \mapsto X$ is an $\sigma-$admissible map such that  $\phi =\operatorname{id}$ on $\partial U$,
	then 
	\begin{equation*}
		deg (\phi, U) = deg (\operatorname{id}, U). 
	\end{equation*}
\end{proposition}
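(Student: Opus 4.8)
The plan is to connect $\phi$ to the identity through the affine homotopy
\begin{equation*}
	h : [0,1]\times\overline{U}\to X, \qquad h(t,u) := (1-t)\,\phi(u) + t\,u,
\end{equation*}
and then to invoke the homotopy invariance of the Kryszewski--Szulkin degree recalled above (\cite[Theorem~6.6]{Wi}). Before anything else I would note that both degrees in the statement are defined: the identity map is $\sigma$-admissible, $\phi$ is $\sigma$-admissible by hypothesis, and since $\phi|_{\partial U}=\operatorname{id}$ and $0\notin\partial U$ we get $0\notin\phi(\partial U)=\partial U=\operatorname{id}(\partial U)$. Moreover $h(0,\cdot)=\phi$ and $h(1,\cdot)=\operatorname{id}$, so the conclusion will follow once $h$ is shown to be an admissible homotopy.

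To that end I would verify the three defining properties. For property (1), if $(t,u)\in[0,1]\times\partial U$ then $\phi(u)=u$, whence $h(t,u)=(1-t)u+tu=u\neq 0$, so $0\notin h([0,1]\times\partial U)$. For property (2), recall that the $\sigma$-topology is generated by the norm $|||\cdot|||$ and is therefore a linear topology, so addition and scalar multiplication are $\sigma$-continuous; combined with the $\sigma$-continuity of $\phi$, this shows that $t_n\to t$ and $u_n\overset{\sigma}{\to}u$ imply $h(t_n,u_n)=(1-t_n)\phi(u_n)+t_n u_n\overset{\sigma}{\to}(1-t)\phi(u)+tu=h(t,u)$. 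For property (3), the key observation is the algebraic identity
\begin{equation*}
	v-h(s,v) = v-(1-s)\phi(v)-s\,v = (1-s)\bigl(v-\phi(v)\bigr) = (1-s)(\operatorname{id}-\phi)(v),
\end{equation*}
which reduces the homotopy increment to a scalar multiple of $(\operatorname{id}-\phi)(v)$. Given $(t,u)\in[0,1]\times U$, I would use the $\sigma$-admissibility of $\phi$ to choose a $\sigma$-neighborhood $N_u$ of $u$ such that $(\operatorname{id}-\phi)(N_u\cap U)$ is contained in a finite-dimensional subspace $F\subset X$; then $N_{(t,u)}:=[0,1]\times N_u$ is a product-topology neighborhood of $(t,u)$ for which $\{\,v-h(s,v)\mid (s,v)\in N_{(t,u)}\cap([0,1]\times U)\,\}\subset F$.

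Once (1)--(3) are checked, $h$ is an admissible homotopy joining $\phi$ and $\operatorname{id}$, and homotopy invariance yields $\deg(\phi,U)=\deg(h(0,\cdot),U)=\deg(h(1,\cdot),U)=\deg(\operatorname{id},U)$, as asserted. I do not expect a genuine obstacle here: this is the standard homotopy-invariance reduction, and the only point that requires a little care is property (3), which goes through precisely because the affine homotopy makes the increment $v-h(s,v)$ proportional to $(\operatorname{id}-\phi)(v)$, so the finite-dimensional subspace furnished by the $\sigma$-admissibility of $\phi$ works uniformly in the parameter $s$.
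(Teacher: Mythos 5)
Your proof is correct and follows essentially the same route as the paper: an affine homotopy between $\phi$ and $\operatorname{id}$ (the paper uses $H(t,u)=(1-t)u+t\,\phi(u)$, i.e.\ your homotopy with the parameter reversed), verification of the three admissibility conditions via the identity $v-H(s,v)=s(\operatorname{id}-\phi)(v)$, and then homotopy invariance of the Kryszewski--Szulkin degree.
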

\begin{proof}
	Consider the map
	\begin{center}
		$H : [0,1]\times \overline{U} \rightarrow X,  \quad \quad (t,u) \mapsto (1-t)u + t \;\phi(u)$.
	\end{center}
	Then, 	$H$ is an admissible homotopy. Indeed: 
	\begin{enumerate}
	\item[$(i)$] For all $(t,u)\in [0,1]\times \partial U, \; 0 \ne H(t,u)$. If not, there exists $(t,u) \in [0,1]\times \partial U$ such that $H(t,u) =0$. Then we obtain the contradiction $0 \in \partial U$.
	\item[$(ii)$]  Suppose $t_n \rightarrow t$ in $[0,1]$, $u_n\overset{\sigma}{\rightarrow }u$ in $\overline{U}$. We have
	\begin{align*}
		\begin{split}
			|||H(t_n,u_n) - H(t,u)||| &= ||| (1-t_n)u_n +t_n \phi(u_n)-(1-t)u-t\phi(u)||| \\
			&=||| (u_n -u)-u(t_n -t)-t_n(u_n -u)+t_n(\phi(u_n) -\phi(u))\\ &\qquad \qquad \qquad + \phi(u)(t_n -t)|||\\
			&\le||| u_n -u||| + |t_n -t| (|||u||| + |||\phi(u)|||) +|t_n| \; |||u_n -u||| \\ &    \qquad \qquad \qquad + |t_n| \; |||\phi(u_n) -\phi(u)||| \\
			&\underset{ n\rightarrow \infty}{ \rightarrow 0}.
		\end{split}
	\end{align*}
	That is, $H$ is $\sigma-$continuous.
	\item[$(iii)$] Let $(t,u)\in [0,1]\times U$. Then $u -H(t,u) = u -(1-t)u - t \;r(u)= t(\operatorname{id}-\phi)(u)$. Using $(2)$ of Definition $\ref{definition 2.1}$ there is a neighborhood $N_{(t,u)} := [0,1]\times N_u$ de $(t,u)$ such that $\{ v-H(s,v) : (s,v) \in N_{(t,u)} \cap ([0,1]\times U)\}$ is contained in a finite-dimensional subspace of $X$. Thus $H$ is $\sigma-$locally finite-dimensional.
	\end{enumerate}
	
Hence, by normalization and homotopy invariance, we have
	\begin{equation*}
		deg (\phi, U)=deg (H(1,.), U) = deg (H(0,.), U) =deg (\operatorname{id}, U) =1.
	\end{equation*}
\end{proof}
\subsection{Proof of Theorem $\ref{theorem 3.1}$}
Observe that, $M$ as in $(\ref{eq 7})$ is $\sigma-$closed. In order to apply Corollary $\ref{cor 3.1}$, we have only to verify that $c\ge b$. Denote by $P$ the projection of $X$ onto $Y$.
Let $\gamma \in \Gamma$ and consider the map 
\begin{eqnarray*}
	P\gamma &:& M \rightarrow Y\\  u &\mapsto& P(\gamma(u)).
\end{eqnarray*}
\begin{enumerate}
	\item[(i)] The map $P\gamma$ is $\tau-$continuous. In fact, let $u_n \overset{\tau}{\rightarrow} u$, then $\gamma(u_n)\overset{\tau}{\to} \gamma(u)$ and $|P\gamma(u_n)-P\gamma(u)|_\tau =|P(\gamma(u_n)-\gamma(u))|_\tau \le |\gamma(u_n)-\gamma(u)|_\tau\to 0$, as $n\to \infty$.
	\item[(ii)] Let $u\in \partial M$. Then $P(\gamma(u)) =Pu =u \ne 0$.
	\item[(iii)] Let $u\in \operatorname{int}(M)$. Then $u$ has a $\tau-$neighborhood $N_u$ in $X$ such that $(\operatorname{id}-\gamma)(N_u\cap U) \subset E_0$, where $E_0$ is a finite-dimensional subspace of $X$. Let $v\in N_u\cap \operatorname{int}(M)$. We have $(\operatorname{id}-P\gamma)(v)= P(v-\gamma(v))\in E_0$.
\end{enumerate}

Since the topology $\sigma$ is induced by the topology $\tau$ on $Y$, we conclude  that $ P\gamma : M \rightarrow Y$ is  $\sigma-$admissible and $0\notin P\gamma (\partial M)$. 

Then the Kryszewski-Szulkin degree of $P\gamma$ is well defined. Hence, by normalization and homotopy invariance and using the Proposition $\ref{prop 3.1}$ we have
\begin{equation*}
	deg (P\gamma, \operatorname{int}(M)) = deg (\operatorname{id}, \operatorname{int}(M)) =1. 
\end{equation*}

By existence, there is $ x \in \operatorname{int}(M)$ such that $P\gamma(u) =0$. Consequently for each $\gamma \in \Gamma$, there exists $x=x(\gamma) \in \operatorname{int}(M)$ such that $\gamma(x)= (\operatorname{id}-P)\gamma (x) \in Z$. 

It follows that for every $\gamma \in \Gamma$ 
\begin{equation*}
	b:=	\underset{u \in Z}{\inf} J (u) \le J(\gamma(x)) \le \underset{u \in M}{\sup}\; J(\gamma(u)).
\end{equation*}

Hence, $c\ge b$ and applying Corollary $\ref{cor 3.1}$, the conclusion of Theorem $\ref{theorem 3.1}$ follows. The proof of Theorem $\ref{theorem 3.1}$ is thus complete. \quad $\square$

\section{Semilinear Schrödinger equation}

In this section we apply our abstract theorem to obtain at least one solution to the following indefinite semilinear Schrödinger equation 

\begin{equation*}
	(P) \qquad 
	\begin{cases}
		-\Delta u + V(x) u = \lambda u + g_0 (x,u), \quad  x\in \mathbb{R}^N, \\
		u \in H^1(\mathbb{R}^N),
	\end{cases} 
\end{equation*}
where $\lambda$ is a parameter. Our assumptions on $(P)$ are the following : 
\begin{enumerate}
	\item[$(V)$] $V \in \mathcal{C}(\mathbb{R}^N, \mathbb{R})$ is $\mathbb{Z}^N-$periodic, that is $V(x+z)=V(x)$ for all $x\in \mathbb{R}^N$ and $z\in \mathbb{Z}^N$, $\lambda\notin \spec (-\Delta +V)$, the spectrum of $-\Delta + V$;
	\item[$(g_0)$]  $g_0 \in \mathcal{C}(\mathbb{R}^N \times \mathbb{R}, \mathbb{R})$. For every $\epsilon >0$ there exists $0\le b_\epsilon(x)\in L^2(\mathbb{R}^N)$ such that $0 \le g_0(x,s)\le b_\epsilon(x) + \epsilon |s|$ a.e. $x\in \mathbb{R}^N$, $s \in \mathbb{R}$.
\end{enumerate}
\begin{remark}
	We would like to mention that 
	\begin{enumerate}
		\item[$(1)$] the assumption $(V)$ is identical to the one considered in \cite{LiSh} and the growth condition $(g_0)$ is inspired from \cite{Te}; and 
		\item[$(2)$] the zero function $u=0$ is a solution of problem $(P)$ only when $g_0(x,0)=0$, which is not the case for infinetely many nonlinearities like, for instance, the following one.
	\end{enumerate}
\end{remark}
\begin{example}
	The set of functions $g_0$ is nonempty. Indeed, define \[g_0(x,s)= e^{-|x|^2-|s|^2}, \quad \text{for every}\quad x\in \mathbb{R}^N,\quad \text{and every}\quad s\in \mathbb{R}.\] Then, the function $g_0$ satisfies assumption $(g_0)$.
\end{example}

Throughout this section, $|\cdot|_p$ stands for the usual $L^p$ norm.
We denote $\rightarrow$ (resp. $\rightharpoonup$) the strong convergence (resp. the weak convergence).\\

For the proof of the following result, see \cite{Wi} or \cite{LiSh}.
\begin{lemma}
Under assumptions $(V)$ and $(g_0)$, the functional $(\text{given in $(\ref{eq 1})$})$ $J \in \mathcal{C}^1(X:= H^1(\mathbb{R}^N), \mathbb{R})$  with
	\begin{equation}
		\label{eq 9}
		J'(u) \varphi= \int_{\mathbb{R}^N} \Big(\nabla u \nabla \varphi + V(x) u \varphi -\lambda u \varphi \Big)dx - \int_{\mathbb{R}^N} g_0(x,u)\varphi dx  \quad \text{for every} \quad \varphi \in H^1(\mathbb{R}^N).
	\end{equation}
\end{lemma}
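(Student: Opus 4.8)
The plan is to verify that the functional $J$ defined in $(\ref{eq 1})$ is of class $\mathcal{C}^1$ on $X=H^1(\mathbb{R}^N)$ with the stated derivative by splitting $J$ into its quadratic part $Q(u):=\frac12\int_{\mathbb{R}^N}(|\nabla u|^2+V(x)u^2-\lambda u^2)\,dx$ and its nonlinear part $\Psi(u):=\int_{\mathbb{R}^N}G_0(x,u)\,dx$, and treating each separately. The quadratic part is routine: since $V$ is continuous and $\mathbb{Z}^N$-periodic it is bounded on $\mathbb{R}^N$, so $Q$ is a continuous bilinear form evaluated on the diagonal, hence $Q\in\mathcal{C}^\infty$ with $Q'(u)\varphi=\int_{\mathbb{R}^N}(\nabla u\nabla\varphi+V(x)u\varphi-\lambda u\varphi)\,dx$. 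The real content is therefore showing $\Psi\in\mathcal{C}^1(X,\mathbb{R})$ with $\Psi'(u)\varphi=\int_{\mathbb{R}^N}g_0(x,u)\varphi\,dx$.

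For the nonlinear part I would first record the consequence of the growth hypothesis $(g_0)$: from $0\le g_0(x,s)\le b_\epsilon(x)+\epsilon|s|$ with $b_\epsilon\in L^2$ one integrates in $s$ to get $0\le G_0(x,s)\le b_\epsilon(x)|s|+\tfrac{\epsilon}{2}|s|^2$, so by Cauchy--Schwarz and the Sobolev embedding $H^1(\mathbb{R}^N)\hookrightarrow L^2(\mathbb{R}^N)$ the functional $\Psi$ is finite-valued on $X$. Next, for $u,\varphi\in X$ I would show the Gateaux derivative exists and equals $\int_{\mathbb{R}^N}g_0(x,u)\varphi\,dx$: write the difference quotient $\tfrac1t(G_0(x,u+t\varphi)-G_0(x,u))=\tfrac1t\int_0^t g_0(x,u+s\varphi)\varphi\,ds$, observe it is pointwise bounded by $(b_\epsilon(x)+\epsilon|u|+\epsilon|\varphi|)|\varphi|$, which is an $L^1$ function independent of $t\in(0,1]$, and apply dominated convergence using the continuity of $g_0$ in its second argument. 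The linear functional $\varphi\mapsto\int g_0(x,u)\varphi\,dx$ is bounded on $X$ because $g_0(\cdot,u)\in L^2(\mathbb{R}^N)$ (again by the growth bound and $u\in L^2$), so $J'(u)$ is well defined and given by $(\ref{eq 9})$.

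Finally I would establish continuity of $u\mapsto\Psi'(u)$ from $X$ to $X^*$, equivalently continuity of the Nemytskii-type map $u\mapsto g_0(\cdot,u)$ from $H^1(\mathbb{R}^N)$ into $L^2(\mathbb{R}^N)$; this is the step I expect to be the main technical obstacle, since the domain $\mathbb{R}^N$ is unbounded. The standard route is: given $u_n\to u$ in $X$, hence in $L^2(\mathbb{R}^N)$, pass to a subsequence along which $u_n\to u$ a.e.\ and $|u_n|\le h$ a.e.\ for some $h\in L^2(\mathbb{R}^N)$; then $g_0(x,u_n(x))\to g_0(x,u(x))$ a.e.\ by continuity of $g_0$, while $|g_0(x,u_n(x))-g_0(x,u(x))|^2\le 2(b_\epsilon(x)+\epsilon h(x))^2+2(b_\epsilon(x)+\epsilon|u(x)|)^2\in L^1$, so dominated convergence gives $g_0(\cdot,u_n)\to g_0(\cdot,u)$ in $L^2$; a subsequence-of-every-subsequence argument upgrades this to convergence of the full sequence. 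Combining $Q\in\mathcal{C}^\infty$ with $\Psi\in\mathcal{C}^1$ yields $J=Q-\Psi\in\mathcal{C}^1(X,\mathbb{R})$ with the derivative $(\ref{eq 9})$, completing the proof; as noted in the excerpt, the details may also be found in \cite{Wi} or \cite{LiSh}.
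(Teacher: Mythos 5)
The paper itself offers no proof of this lemma, deferring to \cite{Wi} and \cite{LiSh}, and your argument is exactly the standard one used in those references (the quadratic part is a bounded quadratic form because $V$ is continuous and $\mathbb{Z}^N$-periodic hence bounded; Gateaux differentiability of $\Psi$ by dominated convergence using the $(g_0)$ growth bound; continuity of $u\mapsto g_0(\cdot,u)$ from $L^2(\mathbb{R}^N)$ into $L^2(\mathbb{R}^N)$ via an a.e.-convergent, dominated subsequence plus a subsequence-of-subsequences argument), so it is correct. The only nit: since $g_0\ge 0$, one has $G_0(x,s)=\int_0^s g_0(x,t)\,dt\le 0$ for $s\le 0$, so your primitive bound should be stated as $|G_0(x,s)|\le b_\epsilon(x)|s|+\tfrac{\epsilon}{2}s^2$ rather than $0\le G_0(x,s)\le b_\epsilon(x)|s|+\tfrac{\epsilon}{2}|s|^2$; this does not affect any step of the proof.
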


Here is the main result of this section.

\begin{theorem}
	\label{theorem 4.1}
	Suppose that assumptions $(V)$ and $ (g_0)$ hold, then $(P)$ has at least one solution. 
\end{theorem}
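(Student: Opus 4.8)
The plan is to verify the hypotheses of Theorem~\ref{theorem 3.1} for the functional $J$ from \eqref{eq 1}, then apply it together with a $(PS)_c$-type argument exploiting the $\mathbb{Z}^N$-periodicity to pass from a Palais--Smale sequence to an actual critical point. The first task is to set up the right Hilbert space structure: since $\lambda\notin\spec(-\Delta+V)$, the operator $L:=-\Delta+V-\lambda$ is self-adjoint on $L^2(\mathbb{R}^N)$ with $0$ in its resolvent set, so the quadratic form $\frac12\langle Lu,u\rangle$ induces an orthogonal splitting $X=Y\oplus Z$ where $Y$ (resp. $Z$) is the negative (resp. positive) spectral subspace, both infinite-dimensional under $(V)$ (this is the point stressed in the introduction, citing \cite{LiSh}). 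I would renorm $X$ by $\|u\|^2=\langle Lu^+,u^+\rangle-\langle Lu^-,u^-\rangle$ so that $J(u)=\tfrac12\|u^+\|^2-\tfrac12\|u^-\|^2-\int_{\mathbb{R}^N}G_0(x,u)\,dx$, with $u^\pm$ the components in $Z,Y$.

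Next I would check condition $(A)$: that $J$ is $\tau$-upper semicontinuous and $\nabla J$ is weakly sequentially continuous. The quadratic part is standard for the $\tau$-topology of Kryszewski--Szulkin; the work is in the nonlinear term $\Psi(u)=\int G_0(x,u)\,dx$. Using the growth bound $0\le g_0(x,s)\le b_\epsilon(x)+\epsilon|s|$ from $(g_0)$ with $b_\epsilon\in L^2$, one gets $0\le G_0(x,s)\le b_\epsilon(x)|s|+\tfrac\epsilon2 s^2$, so $\Psi$ is well-defined, $C^1$, nonnegative, and $\Psi'$ is compact / weakly sequentially continuous by a standard Rellich-type argument (locally compact embedding $H^1\hookrightarrow L^2_{\loc}$ plus the $L^2$ tail control coming from $b_\epsilon$ and the $\epsilon|s|$ term). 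Weak sequential continuity of $\nabla\Psi$ then gives that of $\nabla J$ since the linear part $L$ is weakly continuous, and $\tau$-upper semicontinuity of $J$ follows because along $\tau$-convergent bounded sequences $u_n^+\to u^+$ strongly while $u_n^-\rightharpoonup u^-$, so $\tfrac12\|u_n^+\|^2\to\tfrac12\|u^+\|^2$, $\liminf\tfrac12\|u_n^-\|^2\ge\tfrac12\|u^-\|^2$, and $\Psi(u_n)\to\Psi(u)$.

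Then I verify the geometric condition \eqref{eq 8}, $b:=\inf_Z J>a:=\sup_{\partial M}J$ for a suitable radius $\rho$, where $M=\{u\in Y:\|u\|\le\rho\}$. On $Z$: for $u\in Z$, $J(u)=\tfrac12\|u\|^2-\Psi(u)\ge\tfrac12\|u\|^2-|b_\epsilon|_2|u|_2-\tfrac\epsilon2|u|_2^2\ge(\tfrac12-C\epsilon)\|u\|^2-C'|u|_2$; choosing $\epsilon$ small and minimizing the resulting lower bound over $\|u\|\ge0$ yields $\inf_Z J\ge -\kappa$ for an explicit constant $\kappa$ depending on $|b_\epsilon|_2$ (here I must check the infimum over $Z$ is genuinely finite and bounded below — this uses the $L^2\to\mathbb{R}$ linear growth, not quadratic, of the $b_\epsilon$ term). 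On $Y$: since $g_0\ge0$ gives $G_0\ge0$, for $u\in Y$ we have $J(u)=-\tfrac12\|u\|^2-\Psi(u)\le-\tfrac12\|u\|^2\le 0$, and on $\partial M=\{u\in Y:\|u\|=\rho\}$, $J(u)\le-\tfrac12\rho^2$. So it suffices to pick $\rho$ large enough that $-\tfrac12\rho^2<-\kappa$, giving $a<b$. Theorem~\ref{theorem 3.1} then produces a Palais--Smale sequence $(u_n)$ at level $c\ge b$.

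The main obstacle is the last step: $J$ does \emph{not} satisfy $(PS)_c$ globally because of the translation invariance (the problem is $\mathbb{Z}^N$-periodic, so a nontrivial solution generates a whole orbit of solutions escaping to infinity, and also one could worry about $u_n\rightharpoonup 0$). I would first show $(u_n)$ is bounded: from $J(u_n)\to c$ and $J'(u_n)\to 0$, writing $J(u_n)-\tfrac12 J'(u_n)u_n=\int(\tfrac12 g_0(x,u_n)u_n-G_0(x,u_n))\,dx$ and using $(g_0)$ to control both integrands in $L^1$ by $L^2$-norms, combined with the quadratic structure $J(u_n)=\tfrac12\|u_n^+\|^2-\tfrac12\|u_n^-\|^2-\Psi(u_n)$ and $J'(u_n)(u_n^+-u_n^-)=\|u_n^+\|^2+\|u_n^-\|^2-\int g_0(x,u_n)(u_n^+-u_n^-)$, one extracts $\|u_n\|^2\le C(1+\|u_n\|)$, hence boundedness. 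Passing to a subsequence $u_n\rightharpoonup u$, weak sequential continuity of $\nabla J$ gives $J'(u)=0$, so $u$ is a weak solution of $(P)$. The remaining danger is $u=0$; to exclude it I would invoke the Lions concentration-compactness / vanishing lemma: if $\sup_{y\in\mathbb{R}^N}\int_{B_1(y)}|u_n|^2\to 0$ then $u_n\to 0$ in $L^p$ for $2<p<2^*$, and feeding this back through $(g_0)$ one shows $\Psi'(u_n)\to0$ strongly and hence $\|u_n\|\to0$, forcing $c=0$, contradicting $c\ge b>0$ — provided the geometry has been arranged so that $b>0$, which requires a more careful choice than above (one needs $\inf_Z J>0$, achievable by noting $J(u)\ge(\tfrac12-C\epsilon)\|u\|^2-|b_\epsilon|_2|u|_2$ and that actually on $Z$ small neighborhoods of $0$ dominate — alternatively one shows directly $c>0$ since every $\gamma\in\Gamma$ has $\gamma(x)\in Z$ with $\|\gamma(x)\|$ bounded below, by a linking-type estimate). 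So in the non-vanishing case there exist $y_n\in\mathbb{Z}^N$ with $\int_{B_1(y_n)}|u_n|^2\ge\mu>0$; replacing $u_n$ by the translate $\tilde u_n(\cdot)=u_n(\cdot+y_n)$, which by $\mathbb{Z}^N$-periodicity of $V$ leaves $J$ and $J'$ invariant, gives a new Palais--Smale sequence with $\tilde u_n\rightharpoonup\tilde u\ne0$, and weak sequential continuity of $\nabla J$ yields $J'(\tilde u)=0$ with $\tilde u\ne0$ the desired nontrivial weak solution of $(P)$. I expect the delicate points to be (i) pinning down that $c>0$ so vanishing is genuinely excluded, and (ii) the boundedness estimate, where the merely linear-in-$s$ upper growth of $g_0$ (rather than an Ambrosetti--Rabinowitz superquadratic condition) must be used carefully to close the estimate.
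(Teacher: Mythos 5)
Your main line is essentially the paper's proof: the spectral splitting $X=Y\oplus Z$ under $(V)$ with an equivalent norm making $J(u)=\tfrac12\|u^+\|^2-\tfrac12\|u^-\|^2-\Psi(u)$, verification of condition $(A)$, the geometry $(\ref{eq 8})$ on $Z$ and on $\partial M\subset Y$, Theorem~\ref{theorem 3.1} producing a Palais--Smale sequence, boundedness obtained by testing $J'(u_n)$ against $u_n^+-u_n^-$ and using $(g_0)$ with $\epsilon$ small, and finally $J'(u)=0$ for the weak limit $u$ by weak sequential continuity of $\nabla J$. That already proves the statement: the theorem claims only the existence of a solution, and the weak limit is one; moreover, when $g_0(\cdot,0)\not\equiv 0$ this solution is automatically nontrivial, because $u=0$ with $J'(0)=0$ would force $\int_{\mathbb{R}^N}g_0(x,0)\varphi\,dx=0$ for all $\varphi$, i.e.\ $g_0(\cdot,0)=0$. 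This is exactly how the paper handles (non)triviality, as a remark rather than as part of the theorem.

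Where you go beyond the paper, the argument does not hold up and is in any case superfluous. The concentration-compactness/translation coda hinges on arranging $b=\inf_Z J>0$, which is impossible: $0\in Z$ and $J(0)=0$, so $b\le 0$ no matter how $\rho$ and $\epsilon$ are chosen (the linking level $c$ need not be positive either). Also, with the merely asymptotically linear bound $g_0(x,s)\le b_\epsilon(x)+\epsilon|s|$, the vanishing step ``$u_n\to 0$ in $L^p$, $2<p<2^*$, hence $\Psi'(u_n)\to 0$ strongly'' does not close, since both the $\epsilon|s|$ term and the fixed $b_\epsilon\in L^2$ term act at the $L^2$ level, which Lions' lemma does not control; this machinery is designed for superlinear problems and is not needed here. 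Two smaller slips: $g_0\ge 0$ does \emph{not} give $G_0\ge 0$ (for $s<0$ one has $G_0(x,s)=-\int_s^0 g_0(x,t)\,dt\le 0$), so your $Y$-estimate should instead use $|G_0(x,s)|\le b_\epsilon(x)|s|+\tfrac{\epsilon}{2}s^2$, which still gives $J(u)\le -(\tfrac12-\tfrac{\epsilon}{2S})\|u\|^2+S^{-1/2}|b_\epsilon|_2\|u\|\to-\infty$ on $Y$; and your claim that $\Psi(u_n)\to\Psi(u)$ along bounded $\tau$-convergent sequences is stronger than what is true or needed --- the $\tau$-upper semicontinuity of $J$ only requires $\Psi(u)\le\liminf_k\Psi(u_{n_k})$, which is obtained by a Fatou-type argument as in the paper.
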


\begin{proof}
	Let $L$ be the self-adjoint operator $L: H^1(\mathbb{R}^N) \rightarrow H^1(\mathbb{R}^N)$ defined by 
	\begin{equation*}
		( Lu,v)_1 := \int_{\mathbb{R}^N} (\nabla u . \nabla v + V(x)uv -\lambda uv)dx,
	\end{equation*}
	where $(\cdot)_1 $ is the usual inner product in $H^1(\mathbb{R}^N)$. By assumption $(V)$, it is well known that  $X= Y\oplus Z$ with
	$Y$ and $Z$ two infinite dimensional orthogonal subspaces on which $L$ is respectively negative definite and positive definite (see \cite{Wi} or \cite{LiSh}). We denote $P: X \rightarrow Y$ and $Q: X \rightarrow Z$ the orthogonal projections. 
	
	We introduce a new inner product on X (equivalent to $(\cdot)_1$, see \cite{KS}) by the relation 
	\begin{equation*}
		\langle u,v \rangle:= ( L(Qu-Pu), v )_1 ,\quad u, v \in X,
	\end{equation*}
	with the corresponding norm
	\begin{equation*}
		\|u\| := \langle u,u \rangle^{\frac{1}{2}},
	\end{equation*}
	which is equivalent to the standard norm $\|\cdot\|_1$ on $H^1(\mathbb{R}^N)$.
	
	Since the inner products $\langle \cdot \rangle$ and $(\cdot)_1$ are equivalent, $Y$ and $Z$ are also orthogonal with respect to $\langle \cdot \rangle$. 
	
	One can verify that the
	functional $J$ given in  $(\ref{eq 1})$ can be  written as 
	\begin{equation}
		\label{eq 10}
		J(u) = \dfrac{1}{2}\Big(\|Qu\|^2 - \|Pu\|^2 \Big)- \int_{\mathbb{R}^N} G_0(x,u)dx\quad \text{for every}\quad u \in H^1(\mathbb{R}^N),
	\end{equation}
	and 
	\begin{equation*}
	\langle \nabla J(u), v \rangle = \langle Qu, v\rangle - \langle Pu, v\rangle - \int_{\mathbb{R}^N} g_0(x,u) vdx.
	\end{equation*}
	
	First, let us verify that $J$ is $\tau-$upper semi-continuous, $\nabla J$ is weakly sequentially continuous.\\

	1. $\nabla J$ \textbf{is weakly sequentially continuous}. Let $(u_n) \subset X$ such that $u_n \rightharpoonup u $. Then $ u_n \rightarrow u $ in $ L_{loc}^2 (\mathbb{R}^N)$. Let us consider an arbitrary subsequence still denote by the same symbol $(u_n)$. We can assume that $u_n\rightarrow u$ a.e. on $\mathbb{R}^N$. Since $b_\epsilon(\cdot) \in L_{loc}^2 (\mathbb{R}^N)$, by the assumption $(g_0)$, we have $g_0(x,u_n)\rightarrow g_0(x,u)$ in $L_{loc}^2 (\mathbb{R}^N)$ and $ g_0(x,u_n)\varphi \rightarrow g_0(x,u)\varphi $ in $L_{loc}^2 (\mathbb{R}^N) $ for every $\varphi \in X$. 
	Thus, $\langle \nabla J(u_n), \varphi\rangle \rightarrow \langle \nabla J(u), \varphi\rangle$ for every $\varphi\in X$.\\
	
	2.  \textbf{ $J$ is $\tau-$upper semi-continuous}.
	For every $c\in \mathbb{R}$, let us show that the set $\{u\in X\,|\, J(u)\ge c\}$ is $\tau-$closed. 
    
	Let $(u_n) \subset X$ such that $u_n\overset{\tau}{\rightarrow} u$ in $X$ and $c\le J(u_n)$. Then, $(u_n)$ is bounded. Thus, there is a subsequence $(u_{n_k})$ such that
	\begin{equation*}
		Pu_{n_k} \rightharpoonup Pu, \qquad Qu_{n_k} \rightharpoonup Qu, \qquad u_{n_k} \rightarrow u \; \text{in} \;L_{loc}^2 (\mathbb{R}^N).
	\end{equation*}
	
	Up to a subsequence we also have $u_{n_k} \rightarrow u$ a.e. on $\mathbb{R}^N$. Since $G_0(x,u_{n_k})\ge 0$, then by Fatou lemma we have 
	\begin{equation*}
		\int_{\mathbb{R}^N} G_0(x,u) dx= \int_{\mathbb{R}^N} \underset{k\rightarrow \infty}{\underline{\lim}}G_0(x,u_{n_k})dx  \le \underset{k\rightarrow \infty}{\underline{\lim}} \int_{\mathbb{R}^N} G_0(x,u_{n_k})dx.
	\end{equation*}
	
    Since $\|\cdot\|$ is weak lower semi-continuous, we have 
	\begin{equation*}
		\|Pu\|^2 \le \underset{k\rightarrow \infty}{\underline{\lim}} \|Pu_{n_k}\|^2.
        \end{equation*}
       	Moreover, since $\|Qu_{n_k}\|^2 \to \|Qu\|^2$, we have 
		\begin{equation*}
		 \underset{k\rightarrow \infty}{\overline{\lim}} \Big( -\|Qu_{n_k}\|^2\Big)= 
			-\|Qu\|^2 = 	\underset{k\rightarrow \infty}{\underline{\lim}} \Big(-\|Qu_{n_k}\|^2\Big).
		\end{equation*}
		
	Hence, 
	\begin{align*}
		-J(u) = \dfrac{1}{2}\Big( \|Pu\|^2-\|Qu\|^2 \Big) +\Psi (u) &\le \underset{k\rightarrow \infty}{\underline{\lim}} \Bigg(\dfrac{1}{2}\Big( \|Pu_{n_k}\|^2-\|Qu_{n_k}\|^2 \Big) +\Psi (u_{n_k})  \Bigg)\\
		&=  \underset{k\rightarrow \infty}{\underline{\lim}} (-J(u_{n_k})) \\
		&= - \underset{k\rightarrow \infty}{\overline{\lim}} J(u_{n_k})\\
		&\le -c.
	\end{align*}

	Now, let us verify the geometric assumption $(\ref{eq 8})$ of Theorem $\ref{theorem 3.1}$.
    
	Since the norm $\|\cdot\|$ is equivalent to $\|\cdot\|_1$ on $H^1(\mathbb{R}^N)$, and $H^1(\mathbb{R}^N)$ embeds continuously into $L^2(\mathbb{R}^N)$, there exists $S>0$ (called, by a slight abuse of terminology, the optimal constant of the Sobolev inequality; see \cite{LiSh}) such that
	\begin{equation*}
		S |u|_{2}^2 \le \|u\|^2, \quad \text{for all} \; u \in H^1(\mathbb{R}^N).
	\end{equation*}
	
	For  $u \in Z$, and choosing $\epsilon$ such that $\frac{1}{2}-\frac{\epsilon}{S}\ge \frac{1}{6}$, we have
	\begin{align*}
		J(u) &= \dfrac{1}{2} \|u\|^2 -\int_{\mathbb{R}^N} G_0(x,u)dx\\
		&\ge \dfrac{1}{2} \|u\|^2 - \int_{\mathbb{R}^N} \Big(|b_\epsilon(x)||u| + \epsilon |u|^2 \Big)dx \\
		&\ge \dfrac{1}{2} \|u\|^2 -|b_\epsilon (\cdot)|_2|u|_2 - \epsilon |u|_{2}^2 \\
		&\ge \dfrac{1}{2} \|u\|^2 - \dfrac{\epsilon}{S} \|u\|^2 -S^{-1/2}|b_\epsilon(\cdot)|_2\|u\|\\
		&\ge \dfrac{1}{6} \|u\|^2  - S^{-1/2}|b_{\epsilon}(\cdot)|_2 \|u\|.
	\end{align*}
	Thus $\underset{Z}{\inf}\; J > -\infty$. 
	
	For $u \in Y$, we have
	\begin{align*}
		J(u) &= -\dfrac{1}{2} \|u\|^2 -\int_{\mathbb{R}^N} G_0(x,u)dx\\
		&\le - \dfrac{1}{2} \|u\|^2 +|b_\epsilon(\cdot)|_2|u|_2 \\
		&\le -\dfrac{1}{2} \|u\|^2  + S^{-1/2}|b_{\epsilon}(\cdot)|_2 \|u\|.
	\end{align*}
	
	So for $\rho$ large enough, we have
	\begin{equation*}
		\underset{Z}{\inf}\; J > \underset{\partial M}{\sup}\; J,
	\end{equation*}
	where $M$ is defined in $(\ref{eq 7})$.
	
	By Theorem $\ref{theorem 3.1}$, for some $c\in \mathbb{R}$, there exists $(u_n)\subset X$ such that \begin{equation*}
		J(u_n) \rightarrow c, \qquad J'(u_n) \rightarrow 0, \quad \text{as}\quad n\to \infty.
	\end{equation*}
	
	Let us show that the sequence $(u_n)$ is bounded.
	
	For $u\in X$, we denote by $u^Y$, $u^Z$, the orthogonal projections of $u$ onto $Y$ and $Z$, respectively. 
	
	Let $\overline{u}_n = u_n^Z-u_n^Y$. Then
	\begin{align*}
		\|\overline{u}_n\| &=\int_{\mathbb{R}^N} \Big(|\nabla \overline{u}_n|^2 + |\overline{u}_n|^2\Big)dx\\
		&= \|u_n^Z\|^2+\|u_n^Y\|^2\\
		&=\|u_n\|^2.
	\end{align*} 
	
		For sufficiently large $n\in \mathbb{N}$, we have $\|J'(u_n)\| \le 1$. Hence, for such $n$
	\begin{equation*}
		\|u_n\| =\|\overline{u}_n\| \ge J'(u_n)\overline{u}_n,
	\end{equation*}
	where by $(\ref{eq 9})$,
	\begin{equation*}
	J'(u_n) \overline{u}_n = \int_{\mathbb{R}^N} \Big(\nabla u_n \nabla \overline{u}_n + V(x) u_n \overline{u}_n -\lambda u_n \overline{u}_n \Big)dx - \int_{\mathbb{R}^N} g_0(x,u_n)\overline{u}_n dx. 
	\end{equation*}
	
	Using $(\ref{eq 10})$, we have 
	\begin{align*}
		\|u_n\|&\ge \|u_n^Z\|^2+\|u_n^Y\|^2- \int_{\mathbb{R}^N} g_0(x,u_n)(u_n^Z-u_n^Y)dx\\
		&=\|u_n\|^2 - \int_{\mathbb{R}^N} g_0(x,u_n)(u_n^Z-u_n^Y)dx.
	\end{align*}
	
	By assumption $(g_0)$, and choosing $\epsilon$ such that $\epsilon\le\frac{S}{3}$, we have 
	\begin{align*}
	\Big|\int_{\mathbb{R}^N} g_0(x,u_n)(u_n^Z-u_n^Y)dx\Big|&\le  \int_{\mathbb{R}^N}|g_0(x,u_n)|(|u_n^Z|+|u_n^Y|)dx\\
	&\le \int_{\mathbb{R}^N} \Big(|b_\epsilon(x)|+\epsilon |u_n|\Big) \Big(|u_n^Z|+|u_n^Y|\Big)dx\\ 
	&\le \Big(|b_\epsilon(\cdot)|_2 +\epsilon |u_n|_2\Big)\Big(|u_n^Z|_2+|u_n^Y|_2\Big)dx\\
	&\le 2|u_n|_2\Big(|b_\epsilon(\cdot)|_2 +\epsilon |u_n|_2\Big)\\
	 &\le2|u_n|_2|b_{\epsilon}(\cdot)|_2 +\frac{2S}{3}|u_n|_2^2\\
	&\le \dfrac{2}{3}\|u_n\|^2+2S^{-1/2}|b_{\epsilon}(\cdot)|_2\|u_n\|.
	\end{align*}
	
	Hence,
	\begin{equation*}
		\|u_n\|\ge \frac{1}{3}\|u_n\|^2 -2S^{-1/2}|b_{\epsilon}(\cdot)|_2\|u_n\|.
	\end{equation*}
	
	It follows that $(u_n)$ is bounded in $X$. Up to a subsequence, we may assume that $u_n \rightharpoonup u$ in $X$. For any $w\in C_{0}^{\infty}(\mathbb{R}^N)$, since $ u_n \rightarrow u $ in $ L_{loc}^2 (\mathbb{R}^N)$, we deduce
	\begin{equation*}
		J'(u)w=\lim\limits_{n\to \infty} J'(u_n)w=0.
	\end{equation*}
	
	We conclude that $J'(u)=0$, that is, $u$ is a solution of problem $(P)$ and is nontrivial provided $g_0(x,0) \neq 0$.
	\end{proof}
\section*{Conclusion}
In this work, we presented a natural generalization of the Saddle Point Theorem. The $\sigma-$topology (the topology $\sigma$ is induced by the topology $\tau$ on $Y$) allowed us to extend \cite[Corollary 4.5]{Ra} which played a key role in the proof of the above mentioned generalization. The same ideas will be used in future work to generalize other critical point theorems for strongly indefinite functionals.

\newpage


\begin{thebibliography}{99}
	\bibitem{Ra} P.H. Rabinowitz, Minimax methods in critical point theory with applications to differential equations,
	\newblock {\em CBMS Regional Conference Series in
		Mathematics, Published for the Conference Board of the Mathematical Sciences by American Mathematical Sciences, 65, Washington, DC (1986).}
	\bibitem{KS} W. Kryszewski, A. Szulkin, Generalized linking theorem with an application to a semilinear Schrödinger equation,
	\newblock{\em Adv. Differential Equations 3 (1998)
		441–472.}
	\bibitem{Te} H. T. Tehrani, "A note on asymptotically linear elliptic problems in $\mathbb{R}^N$",
	\newblock{\em Journal of mathematical analysis and applications 271.2 (2002): 546-554.}
	\bibitem{CoTe} D.G. Costa, H. T. Tehrani, Unbounded perturbations of resonant Schrödinger equations,
	\newblock{\em Variational methods: Open Problems, Recent Progress, and
		Numerical Algorithms, in: Contemp. Math., vol. 357, Amer. Math. Soc., Providence, RI, 2004, pp. 101–110.}
	\bibitem{Wi} M. Willem, Minimax Theorems,
	\newblock{\em Progress in Nonlinear Differential Equations and their Applications, Birkhäuser, Boston, 1996.}
	\bibitem{BeRa}  V. Benci, P.H. Rabinowitz, Critical point theorems for indefinite functionals, 
	\newblock {\em Invent. Math. 52 (1979) 241–273.}
	\bibitem{Ho} H. Hofer, On strongly indefinite functionals with applications, 
	\newblock{\em Trans. Amer. Math. Soc. 275 (1983) 185–214.}
	\bibitem{Silva1} E.A.d.B.e. Silva, Linking theorems and applications to semilinear elliptic problems at resonance,
	\newblock{\em Nonlinear Anal. 16 (1991) 455–477.}
	\bibitem{Silva2}  E.A.d.B.e. Silva, Subharmonic solutions for subquadratic Hamiltonian systems, 
	\newblock{\em J. Differential Equations 115 (1995) 120–145.}
	\bibitem{LiSh} S. Liu and Z. Shen, Generalized saddle point theorem and asymptotically linear problems with periodic potential,
	\newblock{\em Nonlinear Anal. 86 (2013), 52–57.}
	\bibitem{BaCo} C.J. Batkam, F. Colin, T. Kaczynski, On differential systems with strongly indefinite variational structure,
	\newblock{\em J. Fixed Point Theory Appl. 16, 313–336 (2014).}
	\bibitem{Baco2} C. J. Batkam and F. Colin, Generalized fountain theorem and applications to
	strongly indefinite semilinear problems.
	\newblock{\em J. Math. Anal. Appl. 405 (2013), 438
	452.}
\end{thebibliography}
\end{document}